\documentclass[12pt]{amsart}

\bibliographystyle{plain}

\usepackage{amsmath}
\usepackage{amsthm}
\usepackage{xfrac}
\usepackage{txfonts}
\usepackage{indentfirst}
\usepackage{mathrsfs}
\usepackage{mathtools}
\usepackage{amssymb}

\newtheorem{theorem}{Theorem}[section]
\newtheorem{lemma}{Lemma}[section]

\newtheorem{definition}{Definition}[section]

\newtheorem*{mainthm}{Main Theorem}

\newtheorem*{thmB'}{Theorem B'}
\newtheorem*{thmB''}{Theorem B''}

\numberwithin{equation}{section}

\newcommand{\Z}{\mathbb{Z}}

\title{Box dimension of the graphs of the  generalized Weierstrass-type functions}
\author{Haojie Ren}
\date{\today}
\address{School of Mathematical Sciences, Fudan University, No 220 Handan Road, Shanghai, China 200433}
\email{20110180012@fudan.edu.cn}
\begin{document}
\setlength{\parindent}{1em}

\maketitle

\begin{abstract}
For a  Lipschitz $\mathbb{Z}-$periodic function $\phi:\mathbb{R}\to \mathbb{R}^2$ satisfied that $\mathbb{R}^2\setminus\{\phi(x):x\in\mathbb{R}\}$ is not connected, an integer $b\ge 2$ and $\lambda\in (c/{b^{\frac12}},1)$, we prove the following  for the  generalized Weierstrass-type function  $W(x)=\sum\limits_{n=0}^{\infty}{{\lambda}^n\phi(b^nx)}$: the box dimension of its graph is equal to $3+2\log_b\lambda$, where $c$ is a constant depending on $\phi$.
\end{abstract}

\section{Introduction}
This paper concerns the box dimension of the graphs of  the  generalized Weierstrass-type functions 
\begin{equation}\label{eqn:Wtype}
W^{\phi}_{\lambda,b}(x)=\sum\limits_{n=0}^{\infty}{{\lambda}^n\phi(b^nx)},\,\, x \in \mathbb{R}
\end{equation}
where $b > 1$, $1/b< \lambda < 1$ and $\phi(x):\mathbb{R} \to \mathbb{R}^d$ is a non-constant $\mathbb{Z}$-periodic Lipschitz function. The first famous example 
$$\sum\limits_{n=0}^{\infty}{{\lambda}^n\cos(b^nx)}$$
was introduced by Weierstrass as a example of a  continuous but nowhere differentiable  function, see \cite{hardy1916weierstrass}. The graphs of $W^{\phi}_{\lambda,b}$ and related functions were studied as fractal curves in fractal geometry starting from the work of Besicovicth and Ursell \cite{besicovitch1937sets}.

For  $d=1$ it is easy to check that $W^{\phi}_{\lambda,b}$ exhibits approximate self-affine with scales $\lambda$ and b, which suggests that its dimension should be equal  to 
$$D_1=2+\frac{\log\lambda}{\log b}.$$
Indeed, Kaplan et all.\cite{kaplan1984lyapunov} proved that in the case that $\phi$ is a trigonometric polynomial, either $W^{\phi}_{\lambda,b}$ is a $C^1$ curve or the box dimension of the graph of $W^{\phi}_{\lambda,b}$ is equal to $D_1$ (see also \cite{przytycki1989hausdorff} \cite{hu1993fractal}). In the literature there have been many works on or related to the Hausdorff dimension of the graph of the Weierstrass type functions (see e.g. \cite{mauldin1986hausdorff,przytycki1989hausdorff,ledrappier1992dimension,solomyak1995random,hochman2014self,varju2019dimension,barany2019hausdorff,mandelbrot1977fractals,hunt1998hausdorff,tsujii2001fat,shen2018hausdorff}).
 Recently, for the case $\phi$ is real analytic, thanks to the new theories in \cite{hochman2014self,barany2019hausdorff}, Shen and the author \cite{ren2021dichotomy} proved the following dichotomy for $W^{\phi}_{\lambda,b}$: Either $W^{\phi}_{\lambda,b}$ is real analytic, or the Hausdorff dimension of its graph is equal to $D_1$.

In this paper we consider the case $d=2$, i.e. $\phi:\mathbb{R}\to \mathbb{R}^2$ is a Lipschitz $\mathbb{Z}-$periodic function. By the similar observation as $d=1$, it is natural to conjecture that, unless $W^{\phi}_{\lambda,b}$ is Lipschitz, the box and Hausdorff dimension of its graph should be  eaqual to 
$$D_2=\begin{cases}
\frac{\log b}{\log\lambda^{-1}},&b\lambda^2<1;\\
3+2\frac{\log\lambda}{\log b},&b\lambda^2\ge1.
\end{cases}
$$
Bara\'nski \cite{Baranski} proved that for $\phi(x)=e^{2\pi ix}$ the box dimension of the graph of the function $W^{\phi}_{\lambda,b}$ 
is equal to $3+2\frac{\log\lambda}{\log b}$, provided $b\in\mathbb{N}$, $b\ge2$ and $\lambda<1$ is sufficiently close to 1. Inspired by the approach in  \cite{Baranski}, we generalize this result to more  general functions $\phi$ by some planar topological methods. More precisely, we proved that

\begin{mainthm}Let $\phi:\mathbb{R}\to\mathbb{R}^2$ be a $\Z$-periodic  Lipschitz function. If the set  $\mathbb{R}^2\setminus\{\phi(s):s\in\mathbb{R}\}$ is not connected, then there is a number $c>0$ depending only on $\phi$ such that the following holds. For any integer $b>1$ and $0<\lambda<1$ with $b\lambda^2>c$, the graph of $W^{\phi}_{\lambda,b}$ has box dimension equal to  $3+2\log_b\lambda$.
\end{mainthm}

 However we can't expect that the above $c$ equal to 1 for every such $\phi$. Since for any integer $b>1,0<\lambda<1$ and $b\lambda^2>1$, we can consider $W_0(x)=e^{2\pi ix}$
and $\phi(x)=W_0(x)-\lambda W_0(bx)$, one has $W^{\phi}_{\lambda,b}(x)=W_0(x)$ and   the set  $\mathbb{R}^2\setminus\{\phi(s):s\in\mathbb{R}\}$ is not connected. For the case $\phi(x)=e^{2\pi ix}$, we believe the box dimension of the graph of complex Weierstrass funciton $W_{\lambda,b}^{e^{2\pi is}}$ is $3+2\frac{\log\lambda}{\log b}$ for $b\lambda^2>1$, unfortunately our method can only  prove it for $b\lambda^2>8\pi$. For simplicity we just write $W^{\phi}_{\lambda,b}$ as W in the rest of this paper.

\section{Preliminaries}\label{sec:Pre}
This section is devoted to give the basic formula for proving our Main Theorem. The observations are from  \cite{Baranski}, but we rewrite here for the reader's convenience. Let $L=L(\phi)$ be the Lipschitz constant of $\phi$, which means
$$L=\sup_{a\neq b\in\mathbb{R}}\frac{|\phi(a)-\phi(b)|}{|a-b|}.$$
To give the specific value of the constant c decided by $\phi$ in Main Theorem, we need the following notation, for any $t>0$ and $z\in\mathbb{R}^2$, let 

$$z=\boldsymbol{O}(t)$$
mean that $|z|\le t$.
For any $n\in\mathbb{N},\,k=0,1,\ldots,b^n-1,$ let
$$z_{n, k}=\frac{k}{b^n}.$$
Let $\varLambda=\{0,1,\ldots,b-1\}$.
\begin{lemma}\label{mainformula}
	For integer $n\ge1,k=0,1,\ldots,b^n-1$ and $j\in\varLambda$, we have
	\begin{equation}\label{formula1}
	\frac{W(z_{n+1, kb+j})-W(z_{n, k})}{\lambda^n}=\phi\left(\frac jb\right)-\phi(0)+\boldsymbol{O}\left(\frac{L\gamma}{1-\gamma}\right)
	\end{equation}
		\begin{equation}\label{formula2}
	\frac{W(z_{n+1, kb+j})-W(z_{n, k})}{\lambda^n}=\phi\left(\frac jb\right)-\phi(0)+\lambda^{-1}\bigg(\phi\left(\frac kb+\frac j{b^2}\right)-\phi\left(\frac k b\right)\bigg)+\boldsymbol{O}\left(\frac{L\gamma^2}{1-\gamma}\right)
	\end{equation}
	where $\gamma=\frac1{b\lambda}.$
\end{lemma}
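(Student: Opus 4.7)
The plan is to expand $W$ using its defining series and split the telescope $W(z_{n+1,kb+j})-W(z_{n,k})$ according to the range of the summation index $m$. Since $z_{n+1,kb+j}=z_{n,k}+j/b^{n+1}$, the $m$-th term of the difference equals $\lambda^{m}\bigl[\phi(b^{m}z_{n+1,kb+j})-\phi(b^{m}z_{n,k})\bigr]$, and I would treat the three ranges $m\ge n+1$, $m=n$, and $m\le n-1$ separately.

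For $m\ge n+1$, write $m=n+1+r$ with $r\ge 0$; then $b^{m}z_{n+1,kb+j}-b^{m}z_{n,k}=jb^{r}\in\mathbb{Z}$, so by $\mathbb{Z}$-periodicity of $\phi$ every one of these terms is identically zero. For $m=n$, both arguments lie in $\mathbb{Z}+\{0,j/b\}$, and periodicity gives $\phi(k+j/b)-\phi(k)=\phi(j/b)-\phi(0)$, which (after dividing by $\lambda^{n}$) is the explicit main term appearing in both formulas. For $m=n-l$ with $1\le l\le n$, one has $b^{m}z_{n+1,kb+j}=k/b^{l}+j/b^{l+1}$ and $b^{m}z_{n,k}=k/b^{l}$, so by the Lipschitz property
\[
\bigl|\phi(b^{m}z_{n+1,kb+j})-\phi(b^{m}z_{n,k})\bigr|\le L\cdot\frac{j}{b^{l+1}}\le \frac{L}{b^{l}}.
\]
Dividing by $\lambda^{n}$ pulls out a factor $\lambda^{-l}$, so the $l$-th contribution is bounded by $L(j/b)\gamma^{l}\le L\gamma^{l}$, where $\gamma=1/(b\lambda)$.

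To finish (\ref{formula1}), I would simply collect every $l\ge 1$ contribution into the error and sum the geometric series $\sum_{l\ge 1}L\gamma^{l}=L\gamma/(1-\gamma)$. To finish (\ref{formula2}), I would instead peel off the $l=1$ contribution explicitly as
\[
\lambda^{-1}\Bigl[\phi\!\left(\tfrac{k}{b}+\tfrac{j}{b^{2}}\right)-\phi\!\left(\tfrac{k}{b}\right)\Bigr],
\]
and relegate the tail $l\ge 2$ to the error, whose size is $\sum_{l\ge 2}L\gamma^{l}=L\gamma^{2}/(1-\gamma)$.

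There is no substantive obstacle; the proof is a routine re-indexing and a clean use of $\mathbb{Z}$-periodicity to annihilate the high-frequency tail, followed by a Lipschitz geometric estimate on the low-frequency head. The only thing that needs care is the bookkeeping: correctly identifying that high-frequency terms ($m\ge n+1$) vanish exactly, that the $m=n$ term is independent of $k$, and that the bound $j\le b-1$ combined with $\gamma<1$ (which holds under our hypothesis $\lambda>c/b^{1/2}$, since then $b\lambda>1$) makes the geometric series converge.
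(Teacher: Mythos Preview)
Your argument is correct and follows essentially the same route as the paper: both expand the defining series, use $\mathbb{Z}$-periodicity to kill the terms with index $\ge n+1$, isolate the $m=n$ term as $\phi(j/b)-\phi(0)$, and bound the remaining terms via the Lipschitz estimate $|\phi(k/b^{l}+j/b^{l+1})-\phi(k/b^{l})|\le L/b^{l}$ to get a geometric tail in $\gamma$. The only cosmetic difference is indexing (the paper reverses the summation index before estimating), and your justification of $\gamma<1$ via the Main Theorem hypothesis is slightly roundabout---the standing assumption $1/b<\lambda<1$ already gives $b\lambda>1$ directly.
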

\begin{proof}
	We only prove  (\ref{formula1}), since the proof of (\ref{formula2}) is similar. By the definition of W and the $\mathbb{Z}$-periodicity of $\phi$, we have 
	\begin{equation}
	\begin{split}
	W(z_{n+1, kb+j})-W(z_{n, k})     &=\sum_{t=0}^{n}\lambda^t\bigg(\phi\left(\frac{kb+j}{b^{n-t+1}}\right)-\phi\left(\frac{k}{b^{n-t}}\right)\bigg) \\
	     &=\lambda^n\sum_{t=0}^{n}\lambda^{-t}\bigg(\phi\left(\frac{kb+j}{b^{t+1}}\right)-\phi\left(\frac{k}{b^{t}}\right)\bigg)\\
	     &=\lambda^n\bigg(\phi\left(\frac jb\right)-\phi(0)+\boldsymbol{O}\left(\frac{L\gamma}{1-\gamma}\right)\bigg).
	 \end{split}
	\end{equation}
	In the last equation we used that $\phi$ is a Lipschitz function for $t\ge1$, which implies
	$$\bigg| \phi\left(\frac{kb+j}{b^{t+1}}\right)-\phi\left(\frac{k}{b^{t}}\right)\bigg|\le\frac L{b^t}.$$

\end{proof}

\section{the case when $\lambda^3b>c_0$}\label{sec:entpor}
In this section, we will find  a constant $c_0>0$ depending on $\phi$ and prove a result  similar to  the Main Theorem for  $\lambda^3b>c_0$, since our method can't deal with the case $\lambda^2b>c$ directly when $\lambda$ is sufficiently close to 1. For set $A,B\subset\mathbb{R}^2$ and $\mu\in\mathbb{R}^2$, let 
$$A+B=\{a+b:a\in A,\, b\in B\}$$
$$\mu A=\{\mu a:a\in A\}.$$
We write $\boldsymbol{B}(z,r)$ for the open disc centred at $z\in\mathbb{R}^2$ of radius r. For a function $f:E\subset\mathbb{R}\to \mathbb{R}^2$, let $\hat{f}$ be the image set of $f$ and let $\Delta(f)$ be the oscillation of $f$ defined as 
$$\Delta(f)=\sup\limits_{a,b\in E}|f(a)-f(b)|.$$
 and 
 $$\varepsilon=\varepsilon(f)=\sup\{r>0:\boldsymbol{B}(0,r)\subset\big(\mathbb{R}^2\setminus\hat{f}\big)\setminus G_{\infty}\}$$
 where $G_{\infty}$ is the unbounded  component of $\mathbb{R}^2\setminus\hat{f}$ (If the above set is empty, let $\varepsilon=0$.)


\begin{lemma}\label{lem3.0}
	Let $\phi$ be a $\Z$-periodic complex Lipschitz function satisfied that $\hat{\phi}$ separates $B(0,\varepsilon)$ from $\infty$ , then 
	\begin{equation}\label{lem3.1}
	\hat{\phi}+\boldsymbol{B}(0,\varepsilon\lambda)\subset\bigcup_{s\in[0,1)}\bigg(\phi(s)+\lambda\hat{\phi}\bigg)\quad\forall\lambda\in(0,1).
	\end{equation}
	
\end{lemma}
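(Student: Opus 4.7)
The plan is to argue by contradiction. Fix $z \in \hat\phi + \boldsymbol{B}(0, \varepsilon\lambda)$ and write $z = \phi(t) + v$ with $|v| < \varepsilon\lambda$; assume toward contradiction that $z \notin \phi(s) + \lambda\hat\phi$ for every $s \in [0,1)$. Equivalently, the continuous loop $\psi:\mathbb{R} \to \mathbb{R}^2$ defined by $\psi(s) = (z - \phi(s))/\lambda$ never meets $\hat\phi$, so its image lies in $\mathbb{R}^2 \setminus \hat\phi$.

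The next step is to locate this image. By the definition of $\varepsilon=\varepsilon(\phi)$, the ball $\boldsymbol{B}(0,\varepsilon)$ is contained in $(\mathbb{R}^2 \setminus \hat\phi) \setminus G_\infty$, hence lies in a single bounded connected component $G_0$ of $\mathbb{R}^2 \setminus \hat\phi$. At $s = t$ one has $\psi(t) = v/\lambda \in \boldsymbol{B}(0,\varepsilon) \subset G_0$. Since $\psi$ is $\mathbb{Z}$-periodic, its image $\psi([0,1])$ is a connected subset of $\mathbb{R}^2 \setminus \hat\phi$ that meets $G_0$, hence is entirely contained in $G_0$.

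The contradiction comes from a diameter estimate. The image $\psi([0,1]) = (z - \hat\phi)/\lambda$ is obtained from $\hat\phi$ by reflection, rescaling by $1/\lambda$, and translation, so $\operatorname{diam}(\psi([0,1])) = \operatorname{diam}(\hat\phi)/\lambda$. On the other hand, $G_0 \subset \operatorname{conv}(\hat\phi)$: any point outside $\operatorname{conv}(\hat\phi)$ can be separated from $\hat\phi$ by a line, so its component in $\mathbb{R}^2 \setminus \hat\phi$ is unbounded and hence equal to $G_\infty$. Thus $\operatorname{diam}(G_0) \le \operatorname{diam}(\hat\phi)$, and the containment $\psi([0,1]) \subset G_0$ forces $\operatorname{diam}(\hat\phi)/\lambda \le \operatorname{diam}(\hat\phi)$, which is impossible for $\lambda<1$ since $\operatorname{diam}(\hat\phi) \ge 2\varepsilon > 0$.

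The main subtle point is choosing the right auxiliary loop. A direct attempt to exhibit $s,u \in [0,1)$ with $\phi(s) + \lambda\phi(u) = z$ offers no obvious starting point, but transferring the question to whether the single loop $\psi$ crosses $\hat\phi$ makes the planar topology usable. Once the reformulation is in place, the remaining ingredients — that $\psi([0,1])$ lies in the component of $\mathbb{R}^2\setminus\hat\phi$ containing $\psi(t)$, the convex-hull bound on $\operatorname{diam}(G_0)$, and the scaling calculation — are all short.
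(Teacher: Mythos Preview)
Your proof is correct and follows essentially the same route as the paper: both argue by contradiction, introduce the auxiliary loop $\psi(s)=(z-\phi(s))/\lambda$, use the value $\psi(t)\in\boldsymbol{B}(0,\varepsilon)$ to place the image of $\psi$ in the bounded component $G$ of $\mathbb{R}^2\setminus\hat\phi$, and then derive the contradiction $\Delta(\phi)/\lambda\le\Delta(\phi)$ from a diameter comparison. The only cosmetic difference is that the paper bounds $\operatorname{diam}(G)$ via $\partial G\subset\hat\phi$, whereas you use the convex-hull inclusion $G\subset\operatorname{conv}(\hat\phi)$; both yield the same inequality.
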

\begin{proof}
 If (\ref{lem3.1}) fails, then there exists $\lambda_0\in(0,1)$, $p_0\in\boldsymbol{B}(0,\varepsilon\lambda_0)$ and $s_0\in[0,1)$ such that 
	$$\phi(s_0)+p_0\notin\bigcup_{s\in[0,1)}\bigg(\phi(s)+\lambda_0\hat{\phi}\bigg).$$
	Thus
	\begin{equation}\label{lem3.1.1}
	\frac{\phi(s_0)+p_0-\phi(s)}{\lambda_0}\notin\hat{\phi}\quad\forall s\in[0,1).
	\end{equation}
	Since $	\frac{\phi(s_0)+p_0-\phi}{\lambda_0}$ is a continuous function, the set of the  image  is contained in a connected component of $\mathbb{R}^2\setminus\hat{\phi}$. Since we also observe that 
	$$\frac{\phi(s_0)+p_0-\phi(s_0)}{\lambda_0}\in\boldsymbol{B}(0,\varepsilon),$$
	we have
	\begin{equation}\label{lem3.1.2}
	\frac{\phi(s_0)+p_0-\phi(s)}{\lambda_0}\in G\quad\forall s\in[0,1)
	\end{equation}
	where G is the connected component of $\mathbb{R}^2\setminus\hat{\phi}$ that contains 0
	.

	Since G is bounded open set and $\partial G\subset \hat{\phi}$, (\ref{lem3.1.2}) yield that
	\begin{equation}\label{lem3.1.3}
		\Delta\left(\frac{\phi(s_0)+p_0-\phi}{\lambda_0}\right)\le \Delta(\phi).
	\end{equation}

	By the definiton of $\Delta$, we also have 
	$$\Delta\left(\frac{\phi(s_0)+p_0-\phi}{\lambda_0}\right)=\frac1{\lambda_0}\Delta(\phi).$$
	We combine this with (\ref{lem3.1.3}) to get the contradicition.
	
\end{proof}

In the next lemma we apply lemma \ref{lem3.0} repeatedly to find a specific open set contained in the  image of the set $\{W(s):s\in\left[\frac k {b^n},\frac{k+1}{b^n}\right]\}$ for  integer $n\ge 1$ and $k=0,1,\ldots,b^n-1$. The strategy of the proof is similar to Bara{\'n}ski \cite{Baranski}. Let 
$c_0>1$ be the solution of the following equation 
$$c_0=\frac L{\varepsilon}\left(2+\frac1{c_0-1}\right).$$

\begin{lemma}\label{OpenSet}
	Assume  $\hat{\phi}$ separates $B(0,\varepsilon)$ from $\infty$ .
Let $b\ge2$ be an integer, $\lambda\in(\frac1b,1)$ and $\lambda^3b> c_0$. Then, for integer $n\ge1$ and $k=0,1,\ldots,b^n-1$, the following holds:
\begin{equation}\label{lem3.2.0}
W(z_{n,k})-\frac{\lambda^n}{1-\lambda}\phi(0)+\lambda^n\big(\hat{\phi}+\boldsymbol{B}(0,\varepsilon\lambda)\big)\subset\bigg\{W(s):s\in\left[\frac k {b^n},\frac{k+1}{b^n}\right]\bigg\}.
\end{equation}
\end{lemma}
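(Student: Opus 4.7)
The plan is to derive (\ref{lem3.2.0}) by combining a one-step Minkowski-set recursion with a limit/compactness argument in the spirit of \cite{Baranski}. Denote the left-hand side of (\ref{lem3.2.0}) by $S_{n,k}$. The key intermediate claim I would prove is the one-step inclusion
\begin{equation*}
S_{n,k}\subset\bigcup_{j\in\varLambda}S_{n+1,\,kb+j}\qquad(n\ge 1,\ 0\le k\le b^n-1).\qquad(\star)
\end{equation*}
Granting $(\star)$, iteration gives $S_{n,k}\subset\bigcup_{\omega\in\varLambda^{m}}S_{n+m,\omega}$ for every $m\ge 0$. For any $p\in S_{n,k}$, this produces a nested address $(j_0,j_1,\dots)$ with $p\in S_{n+m,\omega_m}$ for $\omega_m:=kb^m+\sum_{i=0}^{m-1}j_i b^{m-1-i}$. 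The diameter of $S_{n+m,\omega_m}$ is $O(\lambda^{n+m})$, so the points $s_m:=\omega_m/b^{n+m}$ lie in $[k/b^n,(k+1)/b^n]$ and satisfy $|p-W(s_m)|\to0$; continuity of $W$ at $s^\ast:=\lim s_m$ then forces $p=W(s^\ast)\in\{W(s):s\in[k/b^n,(k+1)/b^n]\}$.

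To prove $(\star)$, I would use formula (\ref{formula1}) to write
\begin{equation*}
W(z_{n+1,kb+j})=W(z_{n,k})+\lambda^{n}\bigl(\phi(j/b)-\phi(0)+e_{n,k,j}\bigr),
\end{equation*}
where $e_{n,k,j}$ is a specific vector with $|e_{n,k,j}|\le L\gamma/(1-\gamma)$. Using the identity $\lambda^n+\lambda^{n+1}/(1-\lambda)=\lambda^n/(1-\lambda)$, a direct substitution reduces $(\star)$, after subtracting $W(z_{n,k})-\lambda^n\phi(0)/(1-\lambda)$ and rescaling by $\lambda^{-n}$, to the purely geometric inclusion
\begin{equation*}
\hat{\phi}+\boldsymbol{B}(0,\varepsilon\lambda)\subset\bigcup_{j\in\varLambda}\bigl(\phi(j/b)+e_{n,k,j}+\lambda\hat{\phi}+\boldsymbol{B}(0,\varepsilon\lambda^{2})\bigr).
\end{equation*}
Given $p=\phi(s)+\lambda q$ in the left-hand side, furnished by Lemma \ref{lem3.0}, I would take $j=\lfloor bs\rfloor$. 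The Lipschitz bound $|\phi(s)-\phi(j/b)|\le L/b$ then shows that $p-\phi(j/b)-e_{n,k,j}-\lambda q$ has norm at most $L/b+L/(b\lambda-1)$, and the task is to place this quantity inside $\boldsymbol{B}(0,\varepsilon\lambda^2)$.

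The decisive, and I expect most delicate, step is to verify that the hypothesis $\lambda^{3}b>c_{0}$ implies $L/b+L/(b\lambda-1)\le\varepsilon\lambda^{2}$. Since $\lambda<1$, one has $b\lambda=\lambda^{3}b/\lambda^{2}>c_{0}$, so $1/(b\lambda-1)<1/(c_{0}-1)$, and also $1/b<\lambda^{3}/c_{0}\le\lambda^{2}/c_{0}$. Hence
\begin{equation*}
\frac{L}{b}+\frac{L}{b\lambda-1}\le L\lambda^{2}\!\left(\frac{1}{c_{0}}+\frac{1}{c_{0}-1}\right)=\frac{L\lambda^{2}(2c_{0}-1)}{c_{0}(c_{0}-1)}=\varepsilon\lambda^{2},
\end{equation*}
where the last equality is exactly the defining relation $c_{0}=(L/\varepsilon)(2+1/(c_{0}-1))$. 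This is precisely the reason $c_0$ was chosen as the solution of that equation, and it is the only place where the specific value matters. With the bound in hand, the discrepancy lies in $\boldsymbol{B}(0,\varepsilon\lambda^{2})$, which closes $(\star)$ and hence the lemma.
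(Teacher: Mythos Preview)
Your approach is essentially the paper's: use Lemma~\ref{lem3.0} together with the Lipschitz bound to set up a one-step recursion, verify the needed numerical inequality from $b\lambda^3>c_0$, and iterate to the limit. Your packaging via $S_{n,k}$ (absorbing the $\phi(0)/(1-\lambda)$ term from the outset) and the explicit address-tracing for the limit step are, if anything, cleaner than the paper's version, which carries the partial geometric sums of $\phi(0)$ explicitly and passes to the limit set-wise.

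There is, however, a slip in your verification of the decisive inequality. From $b\lambda>c_0$ you correctly obtain $1/(b\lambda-1)<1/(c_0-1)$, and from $b\lambda^3>c_0$ you obtain $1/b<\lambda^2/c_0$. But these two bounds only yield
\[
\frac{L}{b}+\frac{L}{b\lambda-1}<\frac{L\lambda^2}{c_0}+\frac{L}{c_0-1},
\]
and since $\lambda<1$ the right-hand side is \emph{strictly larger} than $L\lambda^{2}\bigl(\tfrac{1}{c_{0}}+\tfrac{1}{c_{0}-1}\bigr)=\varepsilon\lambda^{2}$; the displayed chain does not close. The fix is immediate: from $b\lambda^3>c_0$ and $\lambda^2<1$ one has $\lambda^2(b\lambda-1)=b\lambda^3-\lambda^2>c_0-1$, hence the sharper estimate $1/(b\lambda-1)<\lambda^{2}/(c_0-1)$. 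With this replacing your weaker bound, the inequality becomes
\[
\frac{L}{b}+\frac{L}{b\lambda-1}<L\lambda^{2}\Bigl(\frac{1}{c_{0}}+\frac{1}{c_{0}-1}\Bigr)=\varepsilon\lambda^{2},
\]
and the rest of your argument goes through unchanged. (The paper handles this step a little differently, showing $b\lambda^{3}>c_{0}>\tfrac{L}{\varepsilon}\bigl(\lambda+\tfrac{1}{1-\gamma}\bigr)$ via $\gamma<1/c_{0}$ and then dividing by $b\lambda$.)
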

\begin{proof}
	 For $s\in[0,1)$, there exists $j_s\in\varLambda$ such that $|s-\frac{j_s}b|<\frac1b$. Thus
	 $$\phi(s)\in\phi\left(\frac{j_s}b\right)+\boldsymbol{B}\left(0,\frac Lb\right).$$
	 Combining (\ref{lem3.1}) we obtain
	 	\begin{equation}\label{lem3.2.1}
	 \hat{\phi}+\boldsymbol{B}(0,\varepsilon\lambda)\subset\bigcup_{j\in\varLambda}\bigg(\phi\left(\frac jb\right)+\lambda\hat{\phi}+\boldsymbol{B}\left(0,\frac Lb\right)\bigg).
	 \end{equation}
	 Combining with  (\ref{formula1}), this implies
	 $$\hat{\phi}+\boldsymbol{B}(0,\varepsilon\lambda)\subset\bigcup_{j\in\varLambda}\bigg(	\frac{W(z_{n+1, kb+j})-W(z_{n, k})}{\lambda^n}+\phi(0)+\lambda\hat{\phi}+\boldsymbol{B}\left(0,\frac Lb+\frac{L\gamma}{1-\gamma}\right)\bigg).$$
	 Hence,
	 \begin{equation}\label{lem3.2.2}
	W(z_{n, k})+ \lambda^n\hat{\phi}+\boldsymbol{B}(0,\varepsilon\lambda^{n+1})-\lambda^n\phi(0)\subset\bigcup_{j\in\varLambda}\bigg(W(z_{n+1, kb+j})+\lambda^{n+1}\hat{\phi}+\boldsymbol{B}\left(0,\left(\frac 1b+\frac{\gamma}{1-\gamma}\right)L\lambda^n\right)\bigg).
	\end{equation}
	We also observe that
	$$b\lambda^3>c_0=\frac L{\varepsilon}\left(2+\frac1{c_0-1}\right)>\frac L{\varepsilon}\left(1+\frac1{1-\gamma}\right)>\frac L{\varepsilon}\left(\lambda+\frac1{1-\gamma}\right),$$
	since $\gamma<\frac1{b\lambda^3}<\frac1{c_0}$ and $\lambda\in(0,1)$,
	which implies 
	\begin{equation}\label{lem3.2.3}
	\varepsilon\lambda^2>L\left(\frac1b+\frac\gamma{1-\gamma}\right).
	\end{equation}
	We combine (\ref{lem3.2.2}) and (\ref{lem3.2.3}):
	\begin{equation}
	\label{lem3.2.5}
		W(z_{n, k})+ \lambda^n\hat{\phi}+\boldsymbol{B}(0,\varepsilon\lambda^{n+1})-\lambda^n\phi(0)\subset\bigcup_{j\in\varLambda}\bigg(W(z_{n+1, kb+j})+\lambda^{n+1}\hat{\phi}+\boldsymbol{B}(0,\varepsilon\lambda^{n+2})\bigg).
	\end{equation}
	Since (\ref{lem3.2.5}) holds for any integer $n\ge1$ and $k=0,1,\ldots,b^n-1$, we also have
	$$W(z_{n+1, kb+j})+ \lambda^{n+1}\hat{\phi}+\boldsymbol{B}(0,\varepsilon\lambda^{n+2})-\lambda^{n+1}\phi(0)\subset\bigcup_{t\in\varLambda}\bigg(W(z_{n+2, kb^2+jb+t})+\lambda^{n+2}\hat{\phi}+\boldsymbol{B}(0,\varepsilon\lambda^{n+3})\bigg),$$
	for any $j\in\varLambda,$ wich implies
	$$	W(z_{n, k})+ \lambda^n\hat{\phi}+\boldsymbol{B}(0,\varepsilon\lambda^{n+1})-(\lambda^n+\lambda^{n+1})\phi(0)\subset\bigcup_{j\in\{0,1,\ldots,b^2-1\}}\bigg(W(z_{n+2, kb^2+j})+\lambda^{n+2}\hat{\phi}+\boldsymbol{B}(0,\varepsilon\lambda^{n+4})\bigg).$$
	For any $m\in\mathbb{N},$ 
	repeating the this process for m times,
	$$	W(z_{n, k})+ \lambda^n\hat{\phi}+\boldsymbol{B}(0,\varepsilon\lambda^{n+1})-\lambda^n\frac{1-\lambda^m}{1-\lambda}\phi(0)\subset\bigcup_{j\in\{0,1,\ldots,b^m-1\}}\bigg(W(z_{n+m, kb^m+j})+\lambda^{n+m}\hat{\phi}+\boldsymbol{B}(0,\varepsilon\lambda^{n+m+2})\bigg).$$
    Letting  $m\to\infty$, we obtain (\ref{lem3.2.0}).
	
\end{proof}	
\begin{definition}
	Let  $b\ge2,\,d\ge1$  and n be an integer,
	The n generation partition of $\mathbb{R}^d$ into b-adic intervals is
 $$\mathcal{L}_n^{b,d}=\bigg\{\left[\frac{k_1}{b^n},\frac{k_1+1}{b^n}\right)\times\left[\frac{k_2}{b^n},\frac{k_2+1}{b^n}\right)\times\ldots\times\left[\frac{k_d}{b^n},\frac{k_d+1}{b^n}\right):k_1,k_2,\ldots,k_d\in\mathbb{Z}\bigg\}$$ 
 The n generation covering number of $A\subset\mathbb{R}^d$ by b-adic cubes is 
 $$N(A,\mathcal{L}_n^{b,d})=\#\big\{d\in\mathcal{L}_n^{b,d}:D\cap A \ne\emptyset\big\}.$$
 Recall that the lower and upper box dimension are defined as 
 $$\underline{dim}_b(A)=\liminf_{n\to\infty}\frac{\log N(A,\mathcal{L}_n^{b,d})}{\log b},\quad\overline{dim}_b(A)=\limsup_{n\to\infty}\frac{\log N(A,\mathcal{L}_n^{b,d})}{\log b}.$$
 \end{definition}
By using Lemma \ref{OpenSet}, the following proof is  standard, but we give the proof for the reader's convenience.

\begin{theorem}\label{Theorem2}
	Let $\phi:\mathbb{R}\to\mathbb{R}^2$ be a $\Z$-periodic Lipschitz function such that the set  $\mathbb{R}^2\setminus\hat{\phi}$ is not connected, then there is a number $c_0\large \ge1$ depending only on $\phi$ such that, for integer $b>1,0<\lambda<1$ and $b\lambda^3>c_0$, the graph of $W^{\phi}_{\lambda,b}$ has box dimension   $3+2\log_b\lambda$.
\end{theorem}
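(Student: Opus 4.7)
My plan is to prove two matching inequalities for the box dimension, both by counting intersections with the partition $\mathcal{L}_n^{b,3}$ of $\mathbb{R}^3$, and in each direction the necessary input is already in place. As a preliminary normalization I translate $\phi$ by a fixed vector so that $0$ lies in a bounded component of $\mathbb{R}^2\setminus\hat{\phi}$; the hypothesis that $\mathbb{R}^2\setminus\hat{\phi}$ is disconnected guarantees such a component, and the translation only shifts $W$ by a constant, which does not affect the box dimension of the graph. With this normalization $\varepsilon(\phi)>0$, Lemma~\ref{OpenSet} applies, and $c_0$ is defined by the equation preceding that lemma. Note also that the assumption $b\lambda^3>c_0\ge 1$ implies $b\lambda>\lambda^{-2}>1$, which will be used below.

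For the upper bound, I would first derive the oscillation estimate $\Delta(W|_{I_{n,k}})=O(\lambda^n)$ on every dyadic interval $I_{n,k}=[k/b^n,(k+1)/b^n]$, by splitting the defining series at $t=n$: the head $0\le t<n$ uses the Lipschitz bound $Lb^t|x-y|$ and telescopes to $O(\lambda^n)$ since $b\lambda>1$, while the tail $t\ge n$ is controlled by $\Delta(\phi)$ and contributes $O(\lambda^n/(1-\lambda))$. Consequently the graph piece over $I_{n,k}$ is contained in a box of dimensions $b^{-n}\times O(\lambda^n)\times O(\lambda^n)$ and meets at most $O((b\lambda)^{2n})$ cubes of $\mathcal{L}_n^{b,3}$. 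Summing over the $b^n$ intervals yields $N(\mathrm{graph},\mathcal{L}_n^{b,3})=O(b^{3n}\lambda^{2n})$ and hence $\overline{dim}_b\le 3+2\log_b\lambda$.

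For the lower bound, I apply Lemma~\ref{OpenSet}: for every $n\ge 1$ and $k$, $W(I_{n,k})$ contains a translate of the open set $\lambda^n(\hat{\phi}+\boldsymbol{B}(0,\varepsilon\lambda))$, whose planar Lebesgue measure equals $A_\phi\lambda^{2n}$ with $A_\phi=|\hat{\phi}+\boldsymbol{B}(0,\varepsilon\lambda)|>0$. Consequently the number of squares in $\mathcal{L}_n^{b,2}$ meeting $W(I_{n,k})$ is at least $A_\phi(b\lambda)^{2n}$. Now every cube of $\mathcal{L}_n^{b,3}$ meeting the graph of $W|_{[0,1]}$ is uniquely of the form $I_{n,k}\times S$ with $S\in\mathcal{L}_n^{b,2}$ meeting $W(I_{n,k})$, and this correspondence is a bijection; summing the lower bound over $k=0,\dots,b^n-1$ therefore yields $N(\mathrm{graph},\mathcal{L}_n^{b,3})\ge A_\phi\,b^{3n}\lambda^{2n}$, whence $\underline{dim}_b\ge 3+2\log_b\lambda$.

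The only genuine obstacle is the passage from a two-dimensional area lower bound on the image to a three-dimensional cube count on the graph; this is resolved by the bijection in the preceding paragraph, which uses that the $t$-projection of a graph cube onto $\mathcal{L}_n^{b,1}$ is exactly some $I_{n,k}$. Everything else is routine geometric-series and Lipschitz bookkeeping given Lemma~\ref{mainformula} and Lemma~\ref{OpenSet}.
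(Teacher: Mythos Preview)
Your proposal is correct and follows essentially the same route as the paper: normalize by translation so that $\varepsilon(\phi)>0$, obtain the upper bound from the oscillation/H\"older estimate (the paper quotes this as H\"older continuity together with \cite[Lemma~2.2]{Baranski}, you spell out the same series-splitting argument directly), and obtain the lower bound from Lemma~\ref{OpenSet} via the identity $N(\varGamma W,\mathcal{L}_n^{b,3})=\sum_k N(W(I_{n,k}),\mathcal{L}_n^{b,2})$ and an area count. The only cosmetic difference is that the paper bounds the area below by that of the single ball $\boldsymbol{B}(0,\varepsilon\lambda^{n+1})$ rather than by $|\hat{\phi}+\boldsymbol{B}(0,\varepsilon\lambda)|\,\lambda^{2n}$; note also that your constant $A_\phi$ depends on $\lambda$ as well as $\phi$, which is harmless since $\lambda$ is fixed.
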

\begin{proof}
	Without loss of generality, we may assume that $\hat{\phi}$ separates $B(0,\varepsilon)$ from $\infty$. (Since there exists a dot $z_0\in\mathbb{R}^2$ such that $\boldsymbol{B}(z_0,\varepsilon)\subset\mathbb{R}^2\setminus\hat{\phi}$. We could consider $\phi-z_0$  instead of $\phi$.) Since W is H{\"o}lder continous with exponent $\log_b\frac1{\lambda}$, we obtain by \cite[Lemma2.2]{Baranski}
	$$\overline{dim}_{b}\varGamma W\le3+2\log_b\lambda$$
	where $\varGamma W=\{(s,W(s)):s\in[0,1)\}$ is the graph of W.
	
	For the other direction, by the definiton of covering number, we can write
	\begin{equation}\label{3.3.1}
		N(\varGamma W,\mathcal{L}_n^{b,3})=\sum_{k=0}^{b^n-1}N\bigg(\bigg\{W(s):s\in[\frac k{b^n},\frac{k-1}{b^n})\bigg\},\mathcal{L}_n^{b,2}\bigg)\quad\forall n\in\mathbb{N}.
	\end{equation}
	For sufficiently large n, Lemma \ref{OpenSet} implies
	$$N\bigg(\bigg\{W(s):s\in[\frac k{b^n},\frac{k-1}{b^n})\bigg\},\mathcal{L}_n^{b,2}\bigg)\ge\frac{\pi\varepsilon^2\lambda^{2+2n}b^{2n}}2.$$
	We combine this with (\ref{3.3.1}) to get
		$$\underline{dim}_{B}\varGamma W\ge3+2\log_b\lambda.$$
\end{proof}

\section{the case when $b\lambda^2>c$}
In this section, we shall complete the proof of the Main Theorem under the additional assumption that $\lambda\in(0,1/2)$. Otherwise let $c\ge2c_0$ and the result holds by Theorem \ref{Theorem2}, since $b\lambda^2\ge c$ implies $b\lambda^3\ge c_0$. The idea of the proof is similar to  what we did in section 3, but the argument is more complicated. For $\beta\in\mathbb{R}$, let
$$\ell_{\beta}(s)=\phi(s)+\lambda^{-1}\bigg(\phi(\beta+\frac sb)-\phi(\beta)\bigg)\quad\forall s\in[0,1).$$
\begin{lemma}\label{lem4.1}
	Let $\phi:\mathbb{R}\to\mathbb{R}^2$ be a $\Z$-periodic  Lipschitz function satisfied that $\hat{\phi}$ separates $B(0,\varepsilon)$ from $\infty$. For  $\beta\in[0,1)$ and $b\lambda>\frac L{D(\phi)(1-\lambda)}$, the following holds
	\begin{equation}\label{lem4.1.1}
	\hat{\ell_{\beta}}+\boldsymbol{B}(0,\varepsilon\lambda)\subset\bigcup_{s\in[0,1)}\bigg(\ell_{\beta}(s)+\lambda\hat{\phi}\bigg)\quad\forall\lambda\in(0,1).
	\end{equation}
	
\end{lemma}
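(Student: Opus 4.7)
The strategy parallels Lemma \ref{lem3.0}, adding a perturbation estimate to absorb the extra term $\lambda^{-1}(\phi(\beta+\tfrac{s}{b})-\phi(\beta))$ that distinguishes $\ell_{\beta}$ from $\phi$. Arguing by contradiction, I would suppose that some $p_{0}\in\boldsymbol{B}(0,\varepsilon\lambda)$ and $s_{0}\in[0,1)$ violate (\ref{lem4.1.1}) for the given $\lambda$. This forces the continuous map
\[
h(s):=\frac{\ell_{\beta}(s_{0})+p_{0}-\ell_{\beta}(s)}{\lambda}
\]
to avoid $\hat{\phi}$ for every $s\in[0,1)$, so by connectedness of $[0,1)$ its image lies in a single component $G$ of $\mathbb{R}^{2}\setminus\hat{\phi}$. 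Evaluating at $s=s_{0}$ gives $h(s_{0})=p_{0}/\lambda\in\boldsymbol{B}(0,\varepsilon)$, and the separation hypothesis identifies $G$ as the bounded component containing $0$.

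Exactly as in Lemma \ref{lem3.0}, the boundedness of $G$ together with $\partial G\subset\hat{\phi}$ yields $\Delta(h)\le\Delta(\phi)$, which rearranges to the upper bound
\[
\Delta(\ell_{\beta})\ \le\ \lambda\,\Delta(\phi).
\]
For a matching lower bound I would use the decomposition
\[
\ell_{\beta}(s)-\ell_{\beta}(t)=\bigl(\phi(s)-\phi(t)\bigr)+\lambda^{-1}\bigl(\phi(\beta+\tfrac{s}{b})-\phi(\beta+\tfrac{t}{b})\bigr).
\]
Since $s,t\in[0,1)$ gives $|s-t|\le 1$, the Lipschitz estimate controls the norm of the second term by $L/(b\lambda)$ uniformly in $s,t$. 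Taking the supremum over $s,t$ and applying the reverse triangle inequality then yields
\[
\Delta(\ell_{\beta})\ \ge\ \Delta(\phi)-\frac{L}{b\lambda}.
\]

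Chaining the two bounds produces $b\lambda(1-\lambda)\Delta(\phi)\le L$, i.e. $b\lambda\le L/(\Delta(\phi)(1-\lambda))$, which directly contradicts the standing hypothesis (reading the paper's ``$D(\phi)$'' as $\Delta(\phi)$). The step that requires the most attention is the perturbation inequality, in particular the validity of the supremum-minus-constant passage to $\Delta(\ell_{\beta})\ge\Delta(\phi)-L/(b\lambda)$; the planar-topology ingredient $\Delta(\overline G)\le\Delta(\phi)$ (inherited from the proof of Lemma \ref{lem3.0}) can be re-used verbatim, and nothing else in the argument depends on the particular form of $\ell_{\beta}$ beyond its continuity in $s$.
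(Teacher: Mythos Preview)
Your argument is correct and follows essentially the same route as the paper: a contradiction assumption forces the image of $h(s)=(\ell_{\beta}(s_0)+p_0-\ell_{\beta}(s))/\lambda$ into the bounded component $G$ of $\mathbb{R}^2\setminus\hat\phi$, whence $\Delta(\ell_\beta)\le\lambda\,\Delta(\phi)$; the reverse-triangle estimate $\Delta(\ell_\beta)\ge\Delta(\phi)-L/(b\lambda)$ then gives the contradiction $b\lambda(1-\lambda)\Delta(\phi)\le L$, exactly as in the paper (which writes $L\gamma$ for your $L/(b\lambda)$).
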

\begin{proof}
	Arguing by contradiction, assume  (\ref{lem4.1.1}) fails. Then there exists $\lambda_0\in(0,1)$, $p_0\in\boldsymbol{B}(0,\varepsilon\lambda_0)$ and $s_0\in[0,1)$ such that 
	\begin{equation}\label{lem4.1.2}
		\ell_{\beta}(s_0)+p_0\notin\bigcup_{s\in[0,1)}\bigg(\ell_{\beta}(s)+\lambda_0\hat{\phi}\bigg).
	\end{equation}
	Similar to the proof in Lemma \ref{lem3.0}, (\ref{lem4.1.2}) implies
	\begin{equation}\label{lem4.1.3}
	\Delta\big(\frac{\ell_{\beta}(s_0)+p_0-\ell_{\beta}}{\lambda_0}\big)\le \Delta(\phi).
	\end{equation}
	We also observe that, for $s_1,\,s_2\in[0,1)$,
	$$\big|\ell_{\beta}(s_1)-\ell_{\beta}(s_2)\big|\ge\big|\phi(s_1)-\phi(s_2)\big|-\lambda^{-1}\big|\phi(\beta+\frac{s_1}b)-\phi(\beta+\frac{s_2}b)\big|\ge\big|\phi(s_1)-\phi(s_2)\big|-L\gamma.$$
	Therefore 
	\begin{equation}\label{lem4.1.4}
		\Delta\big(\ell_{\beta}(s_0)+p_0-\ell_{\beta}\big)= \Delta(\ell_{\beta})\ge \Delta(\phi)-L\gamma.
	\end{equation}
	We combine (\ref{lem4.1.3}) and (\ref{lem4.1.4}) and obtain
	$$\lambda_0 \Delta(\phi)\ge \Delta(\phi)-L\gamma,$$
	which contradicts our conditions.	
\end{proof}
Let 
$c_1>1$ be the solution of the following equation 
$$c_1=\frac L{\varepsilon}(4+\frac1{c_1-1}).$$
\begin{lemma}\label{OpenSet2}
	Assume $\hat{\phi}$ separates $B(0,\varepsilon)$ from $\infty$,
	Let $b\ge2$ be an integer, $\lambda\in(0,1/2)$ and $\lambda^2b> c_2=\max\{c_1,\frac{2L}{D(\phi)}\}$. Then, for integer $n\ge1$ and $k=0,1,\ldots,b^n-1$, the following holds:
	\begin{equation}\label{lem4.2.0}
	W(z_{n,k})-\frac{\lambda^n}{1-\lambda}\phi(0)+\lambda^n\big(\hat{\ell_{\frac kb}}+\boldsymbol{B}(0,\varepsilon\lambda)\big)\subset\bigg\{W(s):s\in\left[\frac k {b^n},\frac{k+1}{b^n}\right]\bigg\}.
	\end{equation}
	
\end{lemma}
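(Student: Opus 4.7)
The plan is to mimic the proof of Lemma \ref{OpenSet} but with the more precise formula (\ref{formula2}) replacing (\ref{formula1}) and Lemma \ref{lem4.1} replacing Lemma \ref{lem3.0}; the improved $\gamma^2$ error in (\ref{formula2}) is exactly what allows the hypothesis $b\lambda^2 > c_2$ in place of the stronger $b\lambda^3 > c_0$. Concretely, I will establish the one-step inclusion
\begin{equation}\label{plan:onestep}
W(z_{n,k}) + \lambda^n\hat{\ell_{k/b}} + \boldsymbol{B}(0,\varepsilon\lambda^{n+1}) - \lambda^n\phi(0) \subset \bigcup_{j\in\varLambda}\Bigl(W(z_{n+1,kb+j}) + \lambda^{n+1}\hat{\ell_{j/b}} + \boldsymbol{B}(0,\varepsilon\lambda^{n+2})\Bigr),
\end{equation}
where $\ell_{(kb+j)/b} = \ell_{j/b}$ by the $\Z$-periodicity of $\phi$. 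Iterating (\ref{plan:onestep}) $m$ times, the telescoping sum $-\lambda^n\phi(0) - \lambda^{n+1}\phi(0) - \cdots$ tends to $-\lambda^n\phi(0)/(1-\lambda)$, the factor $\lambda^{n+m}\hat{\ell_{j/b}}$ on the right shrinks to a single point, and continuity of $W$ on $[k/b^n,(k+1)/b^n]$ then delivers (\ref{lem4.2.0}) in the limit, exactly as in Lemma \ref{OpenSet}.

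To prove (\ref{plan:onestep}), I will first check that $\lambda<1/2$ together with $\lambda^2 b > 2L/D(\phi)$ forces $b\lambda > L/(D(\phi)(1-\lambda))$, so that Lemma \ref{lem4.1} applies with $\beta=k/b$ and yields
$$\hat{\ell_{k/b}} + \boldsymbol{B}(0,\varepsilon\lambda) \subset \bigcup_{s\in[0,1)}\bigl(\ell_{k/b}(s) + \lambda\hat{\phi}\bigr).$$
Since the Lipschitz constant of $\ell_{\beta}$ is at most $L(1+\gamma)$, for each $s\in[0,1)$ I can pick $j_s\in\varLambda$ with $|s-j_s/b|<1/b$ and discretize the union to
$$\hat{\ell_{k/b}} + \boldsymbol{B}(0,\varepsilon\lambda) \subset \bigcup_{j\in\varLambda}\Bigl(\ell_{k/b}(j/b) + \lambda\hat{\phi} + \boldsymbol{B}\bigl(0, L(1+\gamma)/b\bigr)\Bigr).$$
Plugging in (\ref{formula2}) rewrites $\ell_{k/b}(j/b) = \phi(0) + \lambda^{-n}\bigl(W(z_{n+1,kb+j}) - W(z_{n,k})\bigr) + \boldsymbol{O}\!\left(L\gamma^2/(1-\gamma)\right)$, so after scaling by $\lambda^n$ and rearranging, I obtain the analog of (\ref{plan:onestep}) but with $\hat{\phi}$ in place of $\hat{\ell_{j/b}}$ on the right. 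The pointwise bound $|\ell_{\beta}(s) - \phi(s)| = \lambda^{-1}|\phi(\beta + s/b)-\phi(\beta)| \le L\gamma$ then lets me trade $\lambda^{n+1}\hat{\phi}$ for $\lambda^{n+1}\hat{\ell_{j/b}}$ at the extra cost of $L\gamma\lambda^{n+1} = L\lambda^n/b$.

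The main obstacle is the numerical estimate closing the iteration: the total accumulated radius on the right is $L\lambda^n\bigl[(2+\gamma)/b + \gamma^2/(1-\gamma)\bigr]$, and I must show this is bounded by $\varepsilon\lambda^{n+2}$, equivalently $(2+\gamma) + 1/\bigl(b\lambda^2(1-\gamma)\bigr) \le b\lambda^2\varepsilon/L$. Under $b\lambda^2 > c_1$ one has $\gamma = 1/(b\lambda) < 1/c_1 < 1$, hence $b\lambda^2(1-\gamma) > c_1 - 1$, so the left-hand side is at most $3 + 1/(c_1-1)$. By the defining identity $c_1\varepsilon/L = 4 + 1/(c_1-1)$, this is strictly less than $c_1\varepsilon/L \le b\lambda^2\varepsilon/L$, closing the iteration and completing the proof.
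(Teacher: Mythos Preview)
Your proposal is correct and follows essentially the same approach as the paper: apply Lemma~\ref{lem4.1} with $\beta=k/b$, discretize using the Lipschitz bound for $\ell_\beta$, substitute (\ref{formula2}), replace $\lambda\hat{\phi}$ by $\lambda\hat{\ell}_{j/b}$ via $|\ell_\beta(s)-\phi(s)|\le L\gamma$, and then verify the numerical inequality $(2+\gamma)+1/\bigl(b\lambda^2(1-\gamma)\bigr)<c_1\varepsilon/L$ using the defining equation for $c_1$; the resulting one-step inclusion iterates exactly as in Lemma~\ref{OpenSet}. The only cosmetic differences are that the paper swaps $\hat{\phi}$ for $\hat{\ell}_{j/b}$ before invoking (\ref{formula2}) rather than after, and uses the cruder bound $2L/b$ in place of your $L(1+\gamma)/b$ for the discretization step, leading to a total error of $3L/b+L\gamma^2/(1-\gamma)$ rather than your $L(2+\gamma)/b+L\gamma^2/(1-\gamma)$---both of which are dominated by $\varepsilon\lambda^2$ under $b\lambda^2>c_1$.
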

\begin{proof}
	For $\beta,s\in[0,1)$, there exists $j_s\in\varLambda$ such that $|s-\frac{j_s}b|<\frac1b$. Note
	\begin{equation}\label{4.2.1}
	\bigg|\ell_{\beta}\left(\frac{j_s}b\right)-\ell_{\beta}(s)\bigg|\le\frac Lb+\frac{L\gamma}b\le\frac{2L}b.
	\end{equation}
	 We also have  
	\begin{equation}\label{4.2.2}
	\hat{\phi}\subset\hat{\ell}_{s}+\boldsymbol{B}(0,L\gamma)\quad\forall s\in\mathbb{R}.
	\end{equation}

	Combine (\ref{4.2.1}), (\ref{lem4.1.1}) and (\ref{4.2.2})  with 
	$$b\lambda>b\lambda^2>c_1\ge\frac{2L}{\Delta(\phi)}>\frac{L}{(1-\lambda)\Delta(\phi)}$$
	to obtain
	\begin{equation}\label{4.2.3}
	\hat{\ell}_{\beta}+\boldsymbol{B}(0,\varepsilon\lambda)\subset\bigcup_{j\in\varLambda}\bigg(\ell_{\beta}\left(\frac jb\right)+\lambda\hat{\ell}_{\frac jb}+\boldsymbol{B}\left(0,\frac{3L}b\right)\bigg).
	\end{equation} Therefore
	\begin{equation}\label{4.2.4}
	\hat{\ell}_{\beta}+\boldsymbol{B}(0,\varepsilon\lambda)\subset\bigcup_{j\in\varLambda}\bigg(\ell_{\beta}\left(\frac jb\right)+\lambda\hat{\ell}_{\frac jb}+\boldsymbol{B}(0,\varepsilon\lambda^2)\bigg)
	\end{equation}
	by using $b\lambda^2>c_1=\frac L{\varepsilon}\left(4+\frac1{c_1-1}\right)>3\frac L{\varepsilon}$.
    Combine (\ref{4.2.4}) with $\beta=k/b$ and (\ref{formula2}), we obtain
	\begin{equation}\label{4.2.5}
	\hat{\ell}_{\frac kb}+\boldsymbol{B}(0,\varepsilon\lambda)\subset\bigcup_{j\in\varLambda}\bigg(\frac{W(Z_{n+1,kb+j})-W(Z_{n,k})}{\lambda^n}+\phi(0)+\lambda\hat{\ell}_{\frac jb}+\boldsymbol{B}\left(0,\frac{3L}b+\frac{L\gamma^2}{1-\gamma}\right)\bigg)
	\end{equation}
	We also observe that
	\begin{equation}
	\label{4.2.6}
	b\lambda^2>c_1=\frac L{\varepsilon}\left(4+\frac1{c_1-1}\right)>\frac L{\varepsilon}\left(3+\frac1{1-\gamma}\right)>\frac L{\varepsilon}\left(3+\frac1{c_1(1-\gamma)}\right)>\frac L{\varepsilon}\left(3+\frac{b\gamma^2}{1-\gamma}\right),
	\end{equation}
	since $\gamma<\frac1{b\lambda^2}<\frac1{c_1}$ and $\frac1b>\gamma^2$.
	We combine (\ref{4.2.5}) and (\ref{4.2.6}):
	\begin{equation}
	\label{lem3.2.4}
	W(z_{n, k})+ \lambda^n\hat{\ell}_{\frac kb}+\boldsymbol{B}(0,\varepsilon\lambda^{n+1})-\lambda^n\phi(0)\subset\bigcup_{j\in\varLambda}\bigg(W(z_{n+1, kb+j})+\lambda^{n+1}\hat{\ell}_{\frac{kb+j}b}+\boldsymbol{B}(0,\varepsilon\lambda^{n+2})\bigg).
	\end{equation}
	The rest of the proof is the same as  we did in Lemma \ref{OpenSet}, so (\ref{lem4.2.0}) holds.
\end{proof}	
\begin{proof}[Proof] Without loss generation we may assume  $\hat{\phi}$ separates $B(0,\varepsilon)$ from $\infty$. Let $c=\max\{2c_0,c_1\}$. For the case $\lambda\in(1/2,1)$, we can use $c\ge2c_0$ and finish the proof by Theorem \ref{Theorem2} as explained at the beginning of this section. For the other case, the proof is the same as Theorem \ref{Theorem2} by Lemma \ref{OpenSet2}.
	
\end{proof}

\bibliographystyle{plain}             

\end{document}